\documentclass[a4paper]{amsart}

\title[A note on stochastic integrals as $L^2$-curves]{A note on stochastic integrals as $L^2$-curves}
\author{Stefan Tappe}
\address{ETH Z\"urich, Department of Mathematics, R\"amistrasse 101, CH-8092 Z\"urich, Switzerland}
\email{stefan.tappe@math.ethz.ch}

\usepackage{amscd}
\usepackage{amsmath}
\usepackage{amssymb}
\usepackage{amsthm}
\usepackage{bbm}
\usepackage{stmaryrd}

\usepackage{natbib}

\newif\ifpdf
\ifx\pdfoutput\undefined
   \pdffalse        
\else
   \pdfoutput=1     
   \pdftrue
\fi

\ifpdf
   \usepackage[pdftex]{graphicx}
   \pdfadjustspacing=1
   \pdfcompresslevel=9
\else
   \usepackage{graphicx}
\fi

\frenchspacing

\numberwithin{equation}{section} \swapnumbers

\newtheorem{satz}{Satz}[section]

\newtheorem{proposition}[satz]{Proposition}

\newtheorem{lemma}[satz]{Lemma}

\newtheorem{definition}[satz]{Definition}

\begin{document}

\maketitle\thispagestyle{empty}

\begin{abstract}
In a work of \cite{Onno-Levy} stochastic integrals are
regarded as $L^2$-curves. In \cite{Filipovic-Tappe} we have shown
the connection to the usual It\^o-integral for
c\`adl\`ag-integrands. The goal of this note is to complete this
result and to provide the full connection to the It\^o-integral. We
also sketch an application to stochastic partial differential
equations.
\bigskip

\textbf{Key Words:} Stochastic integrals, $L^2$-curves, connection
to the It\^o-integral, stochastic partial differential equations.
\end{abstract}

\keywords{60H05, 60H15}

\section{Introduction}

In the paper \cite{Filipovic-Tappe} we have established an existence
and uniqueness result for stochastic partial differential equations,
driven by L\'evy processes, by applying a result from
\cite[Thm. 4.1]{Onno-Levy}. In \cite{Onno-Levy} stochastic integrals
are regarded as $L^2$-curves. It was therefore necessary to
establish the connection to the usual It\^o-integral (developed,
e.g., in \cite{Jacod-Shiryaev} or \cite{Protter}) for
c\`adl\`ag-integrands, which we have provided in \cite[Appendix
B]{Filipovic-Tappe}.

The goal of the present note is to complete this result and to
provide the full connection to the It\^o-integral. More precisely,
we will show that the space of adapted $L^2$-curves is embedded into
the space of It\^o-integrable processes (see Proposition
\ref{prop-embedding} below), and that the corresponding
It\^o-integral is a c\`adl\`ag-version of the stochastic integral in
the sense of \cite{Onno-Levy} (see Proposition \ref{prop-coin}
below).

This is the content of Section \ref{sec-curves}. Afterwards, we
outline an application to stochastic partial differential equations
in Section \ref{sec-SDE}.

\section{Stochastic integrals as $L^2$-curves}\label{sec-curves}

Throughout this text, let $(\Omega,\mathcal{F},(\mathcal{F}_t)_{t
\geq 0},\mathbb{P})$ be a filtered probability space satisfying the
usual conditions. Furthermore, let $(H,\| \cdot \|)$ denote a
separable Hilbert space.

For any $T \in \mathbb{R}_+$ the space $C[0,T] :=
C([0,T];L^2(\Omega;H))$ of all continuous curves from $[0,T]$ into
$L^2(\Omega;H)$ is a Banach space with respect to the norm
\begin{align*}
\| r \|_{T} := \sup_{t \in [0,T]} \| r_t \|_{L^2(\Omega;H)} = \sqrt{
\sup_{t \in [0,T]} \mathbb{E} [ \| r_t \|^2 ] }.
\end{align*}
The subspace $C_{\rm ad}[0,T]$ consisting of all adapted processes
from $C[0,T]$ is closed with respect to this norm. Note that, by the
completeness of the filtration $(\mathcal{F}_t)_{t \geq 0}$,
adaptedness is independent of the choice of the representative.

\subsection{Stochastic integral with respect to a L\'evy martingale}

Let $M$ be a real-valued, square-integrable L\'evy martingale. We
recall how in this case the stochastic integral $\text{{\rm
(G-)}}(\Phi \cdot M)$ in the sense of \cite[Sec.
3]{Onno-Levy} is defined for $\Phi \in C_{\rm ad}[0,T]$.

\begin{lemma}\label{lemma-ex-Levy-integral}
\cite[Prop. 3.2.1]{Onno-Levy} Let $\Phi \in C_{\rm ad}[0,T]$ be
arbitrary. For each $t \in [0,T]$ there exists a unique random
variable $Y_t \in L^2(\Omega)$ such that for every $\varepsilon > 0$
there exists $\delta > 0$ such that
\begin{align}\label{partition-vG-int}
\mathbb{E} \left[ \bigg\| Y_t - \sum_{i=0}^{n-1} \Phi_{t_i}
(M_{t_{i+1}} - M_{t_i}) \bigg\|^2 \right] < \varepsilon
\end{align}
for every partition $0 = t_0 < t_1 < \ldots < t_n = t$ with $\sup_{i
= 0,\ldots,n-1} |t_{i+1} - t_i| < \delta$.
\end{lemma}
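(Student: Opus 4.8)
The plan is to realise $Y_t$ as the $L^2(\Omega;H)$-limit of the approximating Riemann--Stieltjes sums, via a Cauchy argument resting on the orthogonality of martingale increments together with the uniform continuity of $\Phi$. For a partition $\pi : 0 = t_0 < t_1 < \ldots < t_n = t$ I write $S(\pi) := \sum_{i=0}^{n-1} \Phi_{t_i}(M_{t_{i+1}} - M_{t_i})$, an element of $L^2(\Omega;H)$. Since $[0,T]$ is compact and $\Phi$ is continuous into $L^2(\Omega;H)$, the curve $\Phi$ is uniformly continuous; moreover, as $M$ is a square-integrable L\'evy martingale with $M_0 = 0$, its increments are centred and its variance function is linear, so that $\mathbb{E}[(M_s - M_u)^2] = c\,(s-u)$ for $0 \leq u \leq s$, where $c := \mathbb{E}[M_1^2]$.

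Next I would establish the key isometry-type estimate. Given two partitions, I pass to their common refinement $0 = s_0 < s_1 < \ldots < s_m = t$ and write $S(\pi) - S(\pi') = \sum_{j=0}^{m-1} (\Phi_{u_j} - \Phi_{v_j})(M_{s_{j+1}} - M_{s_j})$, where $u_j$ and $v_j$ denote the left endpoints, in $\pi$ and $\pi'$ respectively, of the interval containing $s_j$; note that $u_j, v_j \leq s_j$. Each factor $\Phi_{u_j} - \Phi_{v_j}$ is $\mathcal{F}_{s_j}$-measurable, whereas $M_{s_{j+1}} - M_{s_j}$ is centred and independent of $\mathcal{F}_{s_j}$. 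Conditioning on $\mathcal{F}_{s_j}$ (respectively $\mathcal{F}_{s_k}$ for the mixed terms) then shows that the summands are pairwise orthogonal in $L^2(\Omega;H)$ and that the diagonal terms factorise; combined with the linear variance this yields
\begin{align*}
\mathbb{E} \big[ \| S(\pi) - S(\pi') \|^2 \big] = c \sum_{j=0}^{m-1} \mathbb{E} \big[ \| \Phi_{u_j} - \Phi_{v_j} \|^2 \big] \, (s_{j+1} - s_j).
\end{align*}

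From here the conclusion is short. Given $\varepsilon > 0$, uniform continuity provides $\delta > 0$ with $\mathbb{E}[\| \Phi_a - \Phi_b \|^2] < \varepsilon / (2ct+1)$ whenever $|a-b| < \delta$; if both $\pi$ and $\pi'$ have mesh smaller than $\delta$, then $|s_j - u_j|, |s_j - v_j| < \delta$, so the displayed identity gives $\mathbb{E}[\| S(\pi) - S(\pi') \|^2] < \varepsilon$. Hence the sums $S(\pi)$ form a Cauchy net as the mesh tends to zero, and by completeness of $L^2(\Omega;H)$ they converge to a limit $Y_t$, which manifestly satisfies the required $\varepsilon$-$\delta$ property \eqref{partition-vG-int}. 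Uniqueness is immediate, since any second random variable with the same property would differ from $Y_t$ in $L^2(\Omega;H)$-norm by an arbitrarily small amount, hence coincide with $Y_t$.

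The main obstacle I anticipate is the orthogonality bookkeeping on the common refinement: verifying that the left-endpoint evaluations $\Phi_{u_j}, \Phi_{v_j}$ are indeed $\mathcal{F}_{s_j}$-measurable (so that the centred, independent increment $M_{s_{j+1}} - M_{s_j}$ annihilates every cross term and factors out of every diagonal term), and keeping track of which endpoint of $\pi$ or $\pi'$ is attached to each subinterval of the refinement. The remaining ingredients, namely the linearity of the variance of $M$ and the uniform continuity of $\Phi$ on the compact interval, are standard.
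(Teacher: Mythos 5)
Your argument is correct. Note that the paper itself gives no proof of this lemma --- it is quoted verbatim from \cite[Prop. 3.2.1]{Onno-Levy} --- so there is nothing internal to compare against; but your Cauchy-net argument on the common refinement, using that $\Phi_{u_j}-\Phi_{v_j}$ is $\mathcal{F}_{s_j}$-measurable while the centred increment $M_{s_{j+1}}-M_{s_j}$ is independent of $\mathcal{F}_{s_j}$ with $\mathbb{E}[(M_{s_{j+1}}-M_{s_j})^2]=c\,(s_{j+1}-s_j)$, is exactly the standard route and is the ``by hand'' version of the It\^o-isometry computation the paper performs later when it identifies $\text{{\rm (G-)}}(\Phi\cdot M)$ with ${}^p\Phi\cdot M$. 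The only cosmetic point is that $Y_t$ should be read as an element of $L^2(\Omega;H)$ (as your limit indeed is), consistent with the $H$-norm appearing in \eqref{partition-vG-int}.
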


\begin{definition}\label{def-Levy-integral}
(\cite{Onno-Levy}) Let $\Phi \in C_{\rm ad}[0,T]$ be arbitrary. Then
the stochastic integral $Y = \text{{\rm (G-)}}(\Phi \cdot M)$ is the
stochastic process $Y = (Y_t)_{t \in [0,T]}$ where every $Y_t$ is
the unique element from $L^2(\Omega)$ such that
(\ref{partition-vG-int}) is valid.
\end{definition}

We observe that the integrand $\Phi$ as well as the stochastic
integral $\text{{\rm (G-)}} (\Phi \cdot M)$ are only determined up
to a version. In particular, it is not clear if the integral process
has a c\`adl\`ag-version.

\begin{lemma}
\cite[Thm. 3.3.2]{Onno-Levy} For each $\Phi \in C_{\rm ad}[0,T]$ we
have
\begin{align*}
\text{{\rm (G-)}}(\Phi \cdot M) \in C_{\rm ad}[0,T].
\end{align*}
\end{lemma}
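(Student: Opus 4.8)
The plan is to verify the two defining properties of $C_{\rm ad}[0,T]$ separately for $Y := \text{{\rm (G-)}}(\Phi \cdot M)$: adaptedness of $Y$, and continuity of the curve $t \mapsto Y_t$ in $L^2(\Omega;H)$.

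For adaptedness, fix $t \in [0,T]$ and observe that for any partition $0 = t_0 < \ldots < t_n = t$ the approximating sum $\sum_{i=0}^{n-1} \Phi_{t_i}(M_{t_{i+1}} - M_{t_i})$ is $\mathcal{F}_t$-measurable, since $\Phi_{t_i}$ is $\mathcal{F}_{t_i}$-measurable by the adaptedness of $\Phi$ and the increments $M_{t_{i+1}} - M_{t_i}$ are $\mathcal{F}_{t_{i+1}}$-measurable with $t_{i+1} \le t$. By Lemma \ref{lemma-ex-Levy-integral} these sums converge to $Y_t$ in $L^2(\Omega;H)$ along any partition sequence with vanishing mesh, so $Y_t$ is the $L^2$-limit of $\mathcal{F}_t$-measurable random variables and hence admits an $\mathcal{F}_t$-measurable version; by completeness of the filtration this yields an adapted version of $Y$.

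For continuity, the key step is an It\^o-type isometry. Let $c \geq 0$ denote the variance rate of $M$, so that the independent and stationary increments of the square-integrable L\'evy martingale give $\mathbb{E}[(M_v - M_u)^2 \mid \mathcal{F}_u] = c(v-u)$ for $u \le v$. Fix $s < t$ in $[0,T]$ and take a sequence of partitions of $[0,t]$ that all contain $s$ as a partition point and whose mesh tends to zero. Restricting such a partition to $[0,s]$ gives a partition of $[0,s]$ with vanishing mesh, so by Lemma \ref{lemma-ex-Levy-integral} the partial sum over $[0,s]$ converges to $Y_s$ while the full sum converges to $Y_t$; subtracting, the sum over $[s,t]$ converges in $L^2(\Omega;H)$ to $Y_t - Y_s$. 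For a single such sum $\sum_i \Phi_{t_i}(M_{t_{i+1}} - M_{t_i})$ over $[s,t]$ the cross terms vanish: for $i < j$, conditioning on $\mathcal{F}_{t_j}$ and using $\mathbb{E}[M_{t_{j+1}} - M_{t_j} \mid \mathcal{F}_{t_j}] = 0$ shows $\mathbb{E}[\langle \Phi_{t_i}(M_{t_{i+1}} - M_{t_i}), \Phi_{t_j}(M_{t_{j+1}} - M_{t_j}) \rangle] = 0$, while the diagonal terms give $\mathbb{E}[\| \Phi_{t_i} \|^2 (M_{t_{i+1}} - M_{t_i})^2] = c\,(t_{i+1} - t_i)\,\mathbb{E}[\| \Phi_{t_i} \|^2]$ after conditioning on $\mathcal{F}_{t_i}$. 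Hence the squared $L^2$-norm of the sum equals $c \sum_i \mathbb{E}[\| \Phi_{t_i} \|^2](t_{i+1} - t_i)$, a Riemann sum for the continuous integrand $u \mapsto \mathbb{E}[\| \Phi_u \|^2]$. Passing to the limit yields
\begin{align*}
\mathbb{E}[\| Y_t - Y_s \|^2] = c \int_s^t \mathbb{E}[\| \Phi_u \|^2]\,du.
\end{align*}
Since $u \mapsto \mathbb{E}[\| \Phi_u \|^2]$ is continuous, hence bounded on $[0,T]$, the right-hand side tends to zero as $s \to t$, which proves that $t \mapsto Y_t$ is continuous in $L^2(\Omega;H)$. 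Together with adaptedness this gives $Y \in C_{\rm ad}[0,T]$.

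I expect the main obstacle to be the rigorous identification of $Y_t - Y_s$ with the $L^2$-limit of the sums over $[s,t]$: one must check that the $\varepsilon$-$\delta$ characterization in Lemma \ref{lemma-ex-Levy-integral}, which is phrased in terms of partitions of $[0,t]$ and of $[0,s]$ separately, is compatible with splitting a common refinement at the point $s$, and that the orthogonality computation carries over correctly to the $H$-valued setting. The subsequent convergence of the Riemann sums is routine given the continuity of $\Phi$ as an $L^2$-curve.
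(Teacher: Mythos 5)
The paper does not prove this lemma at all: it is imported verbatim as \cite[Thm.~3.3.2]{Onno-Levy}, so there is no in-paper argument to compare against. Your proposal supplies a correct self-contained proof, and it is consistent with the machinery the paper does use later (the It\^o-isometry computation with the elementary process $\Phi^{\mathcal{Z}}$ in the proof that ${}^p\Phi\cdot M$ is a version of $\text{{\rm (G-)}}(\Phi\cdot M)$ is essentially your isometry, there written as $\mathbb{E}[\int_0^t\|\cdot\|^2\,d\langle M,M\rangle_s]$ with $\langle M,M\rangle_t=ct$). Two points deserve one extra line each if you write this up. First, the adaptedness step is clean because $L^2(\Omega,\mathcal{F}_t,\mathbb{P};H)$ is a closed subspace of $L^2(\Omega;H)$, so the limit of the $\mathcal{F}_t$-measurable Riemann--It\^o sums lies in it; completeness of the filtration then makes adaptedness representative-independent, exactly as the paper remarks. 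Second, in the orthogonality computation the integrability of the cross terms is not automatic from Cauchy--Schwarz alone (a product of several $L^2$ factors); it follows because the L\'evy increment $M_{t_{j+1}}-M_{t_j}$ is \emph{independent} of $\mathcal{F}_{t_j}$, so each summand $\Phi_{t_i}(M_{t_{i+1}}-M_{t_i})$ is genuinely in $L^2(\Omega;H)$ and the cross expectations factor and vanish. The splitting of a partition of $[0,t]$ at the point $s$, which you flag as the main obstacle, is unproblematic: the restriction to $[0,s]$ has mesh no larger than the original, so a single $\delta$ serves both applications of Lemma~\ref{lemma-ex-Levy-integral}. With these details your argument gives the stronger quantitative statement $\mathbb{E}[\|Y_t-Y_s\|^2]=c\int_s^t\mathbb{E}[\|\Phi_u\|^2]\,du$, from which membership in $C_{\rm ad}[0,T]$ is immediate.
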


We are now interested in finding the connection between the
stochastic integral $\text{{\rm (G-)}}(\Phi \cdot M)$ and the usual
It\^o-integral (developed, e.g., in \cite{Jacod-Shiryaev}, \cite{Protter}, \cite{Applebaum} 
for the finite dimensional case and in \cite{Da_Prato},
\cite{P-Z-book} for the infinite dimensional case). We use the
abbreviation
\begin{align*}
L^2(\mathcal{P}_T) := L^2(\Omega \times
[0,T],\mathcal{P}_T,\mathbb{P} \otimes \lambda;H),
\end{align*}
where $\mathcal{P}_T$ denotes the predictable $\sigma$-algebra on
$\Omega \times [0,T]$ and $\lambda$ the Lebesgue measure. Since for
any square-integrable L\'evy martingale $M$ the predictable
quadratic covariation $\langle M,M \rangle$ is linear,
$L^2(\mathcal{P}_T)$ is the space of all $L^2$-processes $\Phi$, for
which the It\^o-integral $\Phi \cdot M$ exists, independent of the
choice of $M$.

\begin{lemma}
For each $\Phi \in C_{\rm ad}[0,T]$ there exists a predictable
version ${}^p \Phi \in L^2(\mathcal{P}_T)$ of $\Phi$.
\end{lemma}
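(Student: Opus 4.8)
The plan is to approximate $\Phi$ by predictable step processes assembled from its fixed-time evaluations, and to pass to the limit in the complete space $L^2(\mathcal{P}_T)$. For each $n \in \mathbb{N}$ I would fix the dyadic partition $t_k^n := kT/2^n$, $k = 0,\ldots,2^n$, choose for every $\Phi_{t_k^n}$ an $\mathcal{F}_{t_k^n}$-measurable representative (possible since $\Phi$ is adapted and the filtration is complete), and define the left-continuous step process
\begin{align*}
\Phi^n := \Phi_0 \, \mathbbm{1}_{\{0\}} + \sum_{k=0}^{2^n - 1} \Phi_{t_k^n} \, \mathbbm{1}_{(t_k^n, t_{k+1}^n]}.
\end{align*}
Each $\Phi^n$ is left-continuous and adapted, hence predictable, and takes only finitely many values in $L^2(\Omega;H)$, so that $\int_0^T \mathbb{E}[\| \Phi_t^n \|^2] \, dt < \infty$ and thus $\Phi^n \in L^2(\mathcal{P}_T)$.

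Next I would establish convergence that is \emph{uniform} in $t$. Since $[0,T]$ is compact, the curve $t \mapsto \Phi_t$ is uniformly continuous from $[0,T]$ into $L^2(\Omega;H)$; denoting its modulus of continuity by $w$, for $t \in (t_k^n, t_{k+1}^n]$ one has $\| \Phi_t^n - \Phi_t \|_{L^2(\Omega;H)} = \| \Phi_{t_k^n} - \Phi_t \|_{L^2(\Omega;H)} \leq w(T/2^n)$, whence $\sup_{t \in [0,T]} \| \Phi_t^n - \Phi_t \|_{L^2(\Omega;H)} \to 0$. Consequently $\| \Phi^n - \Phi^m \|_{L^2(\mathcal{P}_T)}^2 \leq T \sup_{t \in [0,T]} \| \Phi_t^n - \Phi_t^m \|_{L^2(\Omega;H)}^2 \to 0$, so $(\Phi^n)$ is Cauchy in $L^2(\mathcal{P}_T)$; by completeness there is a limit ${}^p\Phi \in L^2(\mathcal{P}_T)$, which is therefore predictable.

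The main obstacle is to upgrade this to a genuine version, i.e.\ to obtain ${}^p\Phi_t = \Phi_t$ $\mathbb{P}$-a.s.\ for \emph{every} $t \in [0,T]$, because for a fixed $t$ the relation $\Phi_t^n \to \Phi_t$ holds only in $L^2(\Omega;H)$, hence $\mathbb{P}$-a.s.\ only along a $t$-dependent subsequence, and mean-square continuity gives no pathwise regularity. To overcome this I would exploit the uniformity in $t$: passing to a subsequence with $\sup_{t} \| \Phi_t^{n_{j+1}} - \Phi_t^{n_j} \|_{L^2(\Omega;H)} \leq 2^{-j}$, the random variable $\sum_j \| \Phi_t^{n_{j+1}} - \Phi_t^{n_j} \|$ has, by Minkowski's inequality, $L^2(\Omega)$-norm at most $\sum_j 2^{-j} < \infty$ for every fixed $t$, so it is finite $\mathbb{P}$-a.s.\ and the telescoping series converges absolutely in $H$ $\mathbb{P}$-a.s. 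Defining ${}^p\Phi_t := \lim_j \Phi_t^{n_j}$ on the predictable set where this limit exists in $H$ and ${}^p\Phi_t := 0$ elsewhere yields a predictable process which, combined with $\Phi_t^{n_j} \to \Phi_t$ in $L^2(\Omega;H)$, satisfies ${}^p\Phi_t = \Phi_t$ $\mathbb{P}$-a.s.\ for every $t \in [0,T]$. Since ${}^p\Phi$ moreover coincides $\mathbb{P} \otimes \lambda$-a.e.\ with the $L^2(\mathcal{P}_T)$-limit of $(\Phi^{n_j})$, it indeed represents an element of $L^2(\mathcal{P}_T)$, which completes the argument.
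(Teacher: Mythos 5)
Your argument is correct, but it takes a genuinely different route from the paper. The paper's proof is two lines: it invokes \cite[Prop.\ 3.6.ii]{Da_Prato} for the existence of a predictable version of any (stochastically continuous) adapted process, and then only verifies the integrability $\int_0^T \mathbb{E}[\|{}^p\Phi_t\|^2]\,dt \leq T \sup_{t}\mathbb{E}[\|\Phi_t\|^2] < \infty$ to conclude ${}^p\Phi \in L^2(\mathcal{P}_T)$. You instead prove the existence of the predictable version from scratch: you build elementary predictable left-point discretizations $\Phi^n$, use the uniform continuity of the curve $t \mapsto \Phi_t$ in $L^2(\Omega;H)$ to get uniform-in-$t$ convergence, and then extract a subsequence with summable increments so that the pointwise limit exists $\mathbb{P}$-a.s.\ at \emph{every} fixed $t$ and hence defines a genuine version, not merely a $\mathbb{P}\otimes\lambda$-a.e.\ equal process. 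This last step is exactly the point where a naive ``take the $L^2(\mathcal{P}_T)$-limit'' argument would fail, and you handle it correctly; the measurability of the convergence set and of the limit with respect to $\mathcal{P}_T$ is standard and fine to assert. What your approach buys is self-containedness and a slightly stronger, more explicit statement (the predictable version arises as an a.s.\ limit of Riemann-type discretizations, exploiting mean-square rather than mere stochastic continuity); what the paper's approach buys is brevity, at the cost of delegating the main content to the cited result. Both proofs are valid.
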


\begin{proof}
By \cite[Prop. 3.6.ii]{Da_Prato} there exists a predictable version
${}^p \Phi$ of $\Phi$. Since $\Phi \in C_{\rm ad}[0,T]$, we also
have
\begin{align*}
\int_0^T \mathbb{E} [ \| {}^p \Phi_t \|^2 ] dt = \int_0^T \mathbb{E}
[ \| \Phi_t \|^2 ] dt \leq T \sup_{t \in [0,T]} \mathbb{E}[\| \Phi_t
\|^2] < \infty,
\end{align*}
that is ${}^p \Phi \in L^2(\mathcal{P}_T)$.
\end{proof}

\begin{proposition}\label{prop-embedding}
The map $\Phi \mapsto {}^p \Phi$ defines an embedding from $C_{\rm
ad}[0,T]$ into $L^2(\mathcal{P}_T)$.
\end{proposition}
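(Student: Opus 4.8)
The plan is to verify that $\Phi \mapsto {}^p\Phi$ is a well-defined, linear, bounded and injective map from $C_{\rm ad}[0,T]$ into $L^2(\mathcal{P}_T)$; the preceding lemma already supplies, for each $\Phi$, a predictable version ${}^p\Phi \in L^2(\mathcal{P}_T)$, so what remains is to check these four properties. First I would settle well-definedness, namely that the resulting element of $L^2(\mathcal{P}_T)$ does not depend on the chosen predictable version. If ${}^p\Phi$ and ${}^p\tilde\Phi$ are two predictable versions of the same $\Phi$, then for each fixed $t \in [0,T]$ we have ${}^p\Phi_t = \Phi_t = {}^p\tilde\Phi_t$ $\mathbb{P}$-almost surely, and Fubini's theorem upgrades this to
\[
\int_0^T \mathbb{P}({}^p\Phi_t \neq {}^p\tilde\Phi_t)\,dt = 0,
\]
so that ${}^p\Phi = {}^p\tilde\Phi$ for $(\mathbb{P} \otimes \lambda)$-almost all $(\omega,t)$; hence both represent the same element of $L^2(\mathcal{P}_T)$.

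Linearity then follows formally: for $\Phi,\Psi \in C_{\rm ad}[0,T]$ the process ${}^p\Phi + {}^p\Psi$ is a predictable version of $\Phi + \Psi$, and by the uniqueness just established it coincides with ${}^p(\Phi + \Psi)$ in $L^2(\mathcal{P}_T)$; the same reasoning applies to scalar multiples. Boundedness is immediate from the computation already carried out in the proof of the preceding lemma: for every $\Phi \in C_{\rm ad}[0,T]$,
\[
\| {}^p\Phi \|_{L^2(\mathcal{P}_T)}^2 = \int_0^T \mathbb{E}[\| {}^p\Phi_t \|^2]\,dt = \int_0^T \mathbb{E}[\| \Phi_t \|^2]\,dt \leq T\,\| \Phi \|_T^2,
\]
so that $\| {}^p\Phi \|_{L^2(\mathcal{P}_T)} \leq \sqrt{T}\,\| \Phi \|_T$ and the linear map is continuous.

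The remaining, and genuinely substantive, step is injectivity, and this is where the continuity of the curves enters. Suppose ${}^p\Phi = 0$ in $L^2(\mathcal{P}_T)$, i.e. ${}^p\Phi = 0$ $(\mathbb{P}\otimes\lambda)$-almost everywhere. By Fubini's theorem $\mathbb{E}[\| \Phi_t \|^2] = \mathbb{E}[\| {}^p\Phi_t \|^2] = 0$ for $\lambda$-almost all $t \in [0,T]$, so the nonnegative function $t \mapsto \| \Phi_t \|_{L^2(\Omega;H)}$ vanishes on a dense subset of $[0,T]$. Since $t \mapsto \Phi_t$ is continuous from $[0,T]$ into $L^2(\Omega;H)$, this function is continuous, hence vanishes identically, and therefore $\Phi = 0$ in $C_{\rm ad}[0,T]$.

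I expect injectivity to be the only place requiring genuine care: the hypothesis ${}^p\Phi = 0$ delivers information only for Lebesgue-almost every $t$, and it is precisely the continuity built into the definition of $C_{\rm ad}[0,T]$ that allows me to promote this to \emph{all} $t \in [0,T]$. The other three properties are routine applications of Fubini's theorem together with the norm estimate already recorded above.
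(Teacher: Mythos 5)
Your proof is correct and follows essentially the same route as the paper's: well-definedness and linearity via the a.s.\ pointwise-in-$t$ identification of predictable versions integrated over $[0,T]$, boundedness via the estimate $\int_0^T \mathbb{E}[\|{}^p\Phi_t\|^2]\,dt \leq T\,\|\Phi\|_T^2$, and injectivity from the continuity of $t \mapsto \mathbb{E}[\|\Phi_t\|^2]$, which promotes vanishing for $\lambda$-a.e.\ $t$ to vanishing for all $t$. No gaps.
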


\begin{proof}
For two predictable versions $\Phi^1,\Phi^2 \in L^2(\mathcal{P}_T)$
of $\Phi$ we have
\begin{align*}
\int_0^T \mathbb{E} [\| \Phi_t^1 - \Phi_t ^2 \|^2] dt = 0,
\end{align*}
whence $\Phi^1 = \Phi^2$ in $L^2(\mathcal{P}_T)$. Therefore, the map
$\Phi \mapsto {}^p \Phi$ is well-defined. The linearity of $\Phi
\mapsto {}^p \Phi$ is immediately checked, and the estimate
\begin{align*}
\int_0^T \mathbb{E}[\| {}^p \Phi_t \|^2] dt = \int_0^T \mathbb{E}[\|
\Phi_t \|^2] dt \leq T \sup_{t \in [0,T]} \mathbb{E}[\| \Phi_t
\|^2], \quad \Phi \in C_{\rm ad}[0,T]
\end{align*}
proves the continuity of $\Phi \mapsto {}^p \Phi$. For $\Phi \in
C_{\rm ad}[0,T]$ with ${}^p \Phi = 0$ in $L^2(\mathcal{P}_T)$ we
have
\begin{align*}
\int_0^T \mathbb{E}[\| \Phi_t \|^2] dt = \int_0^T \mathbb{E}[\| {}^p
\Phi_t \|^2] dt = 0.
\end{align*}
Since $\Phi \in C_{\rm ad}[0,T]$, the map $t \mapsto \mathbb{E}[\|
\Phi_t \|^2]$ is continuous, which implies $\Phi = 0$ in $C_{\rm
ad}[0,T]$, showing that $\Phi \mapsto {}^p \Phi$ is injective.
\end{proof}

The notation ${}^p \Phi$ reminds of the {\em predictable projection}
of a process $\Phi$, which we shall briefly recall. In the
real-valued case one defines, for every
$\overline{\mathbb{R}}$-valued and $\mathcal{F}_T \otimes
\mathcal{B}[0,T]$-measurable process $\Phi$ the predictable
projection ${}^{\pi} \Phi$ of $\Phi$, according to \cite[Thm.
I.2.28]{Jacod-Shiryaev}, as the (up to an evanescent set) unique
$(-\infty,\infty]$-valued process satisfying the following two
conditions:
\begin{enumerate}
\item It is predictable;

\item $({^{\pi}} \Phi)_{\tau} = \mathbb{E}[\Phi_{\tau} \, | \, \mathcal{F}_{\tau
-}]$ on $\{ \tau \leq T \}$ for all predictable times $\tau$.
\end{enumerate}
Note that for every predictable process $\Phi$ we have ${}^{\pi}
\Phi = \Phi$.

We transfer this definition to any $H$-valued process
$\tilde{\Phi}$, which is an $\mathcal{F}_T \otimes
\mathcal{B}[0,T]$-measurable version of a process $\Phi \in
C_{\rm ad}[0,T]$ by using the notion of conditional expectation from
\cite[Sec. 1.3]{Da_Prato}. Then, the second property of the
predictable projection ensures that ${}^{\pi} \tilde{\Phi}$ is
finite, i.e. $H$-valued.

We obtain the following relation between the embedding ${}^p \Phi$
and the predictable projection ${}^{\pi} \tilde{\Phi}$:

\begin{lemma}\label{lemma-pred-proj}
For each $\Phi \in C_{\rm ad}[0,T]$ and every $\mathcal{F}_T \otimes
\mathcal{B}[0,T]$-measurable version $\tilde{\Phi}$ we have
\begin{align*}
{}^{\pi} \tilde{\Phi} = {}^p \Phi \quad \text{in
$L^2(\mathcal{P}_T)$.}
\end{align*}
\end{lemma}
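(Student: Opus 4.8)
The plan is to establish the identity fibrewise in the time variable and then to pass to a statement modulo $\mathbb{P} \otimes \lambda$ via Fubini's theorem. The starting observation is that every deterministic time $\tau \equiv t$ with $t \in [0,T]$ is a predictable time satisfying $\mathcal{F}_{\tau -} = \mathcal{F}_{t-}$. Hence the defining property of the predictable projection yields, for each fixed $t \in [0,T]$,
\begin{align*}
({}^{\pi} \tilde{\Phi})_t = \mathbb{E}[\tilde{\Phi}_t \mid \mathcal{F}_{t-}] \quad \text{$\mathbb{P}$-a.s.}
\end{align*}
Since both $\tilde{\Phi}$ and ${}^p \Phi$ are versions of $\Phi$, we have $\tilde{\Phi} = \Phi = {}^p \Phi$ $\,\mathbb{P} \otimes \lambda$-a.e., so by Fubini there is a Lebesgue-null set $N \subseteq [0,T]$ outside of which $\tilde{\Phi}_t = \Phi_t = {}^p \Phi_t$ holds $\mathbb{P}$-a.s. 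It therefore suffices to show, for every $t \notin N$, that $\mathbb{E}[\Phi_t \mid \mathcal{F}_{t-}] = \Phi_t$ $\mathbb{P}$-a.s.

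The heart of the matter, and the step I expect to be the main obstacle, is to verify that $\Phi_t$ is $\mathcal{F}_{t-}$-measurable, for only then is the conditional expectation trivial. Here I would exploit that $\Phi \in C_{\rm ad}[0,T]$ is not merely adapted but continuous as an $L^2(\Omega;H)$-valued curve, hence in particular left-continuous: $\Phi_t = \lim_{s \uparrow t} \Phi_s$ in $L^2(\Omega;H)$. For each $s < t$ the random variable $\Phi_s$ admits an $\mathcal{F}_s$-measurable, a fortiori $\mathcal{F}_{t-}$-measurable, representative, so $\Phi_s \in L^2(\Omega,\mathcal{F}_{t-};H)$. Since the latter is a closed subspace of $L^2(\Omega,\mathcal{F};H)$, the $L^2$-limit $\Phi_t$ again lies in $L^2(\Omega,\mathcal{F}_{t-};H)$, i.e.\ $\Phi_t$ has an $\mathcal{F}_{t-}$-measurable version. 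The vector-valued conditional expectation of \cite[Sec. 1.3]{Da_Prato} then leaves $\Phi_t$ invariant, giving $\mathbb{E}[\Phi_t \mid \mathcal{F}_{t-}] = \Phi_t$ $\mathbb{P}$-a.s.

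Combining the two observations, we obtain $({}^{\pi} \tilde{\Phi})_t = {}^p \Phi_t$ $\mathbb{P}$-a.s.\ for every $t \in [0,T] \setminus N$. Writing $A := \{ (\omega,t) : ({}^{\pi} \tilde{\Phi})_t(\omega) \neq {}^p \Phi_t(\omega) \}$, which is $\mathcal{P}_T$-measurable since both processes are predictable, each $t$-section $A_t$ with $t \notin N$ is $\mathbb{P}$-null; Fubini's theorem then forces $(\mathbb{P} \otimes \lambda)(A) = \int_0^T \mathbb{P}(A_t) \, dt = 0$. Thus ${}^{\pi} \tilde{\Phi} = {}^p \Phi$ in $L^2(\mathcal{P}_T)$, as claimed. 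The only points requiring genuine care are the left-continuity argument above and the fact that the closedness of $L^2(\Omega,\mathcal{F}_{t-};H)$ under $L^2$-limits persists in the $H$-valued setting, which is guaranteed by the completeness of the filtration $(\mathcal{F}_t)_{t \geq 0}$.
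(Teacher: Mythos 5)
Your proof is correct and follows essentially the same route as the paper's: evaluate the predictable projection at deterministic times (which are predictable times), identify it with $\mathbb{E}[\Phi_t \mid \mathcal{F}_{t-}]$, and integrate over $[0,T]$ to conclude equality in $L^2(\mathcal{P}_T)$. The only divergence is in the final identity: the paper writes $\mathbb{E}[\Phi_t \mid \mathcal{F}_{t-}] = \mathbb{E}[{}^p\Phi_t \mid \mathcal{F}_{t-}] = {}^p\Phi_t$, exploiting that the predictable process ${}^p\Phi$ evaluated at $t$ is $\mathcal{F}_{t-}$-measurable, whereas you obtain the $\mathcal{F}_{t-}$-measurability of $\Phi_t$ itself from the left-continuity of the $L^2$-curve and the closedness of $L^2(\Omega,\mathcal{F}_{t-};H)$ --- both arguments are valid, and your Fubini detour through the null set $N$ is harmless (indeed unnecessary if ``version'' is read as a modification, i.e.\ a.s.\ equality at every fixed $t$).
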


\begin{proof}
For each $t \in [0,T]$ the identities
\begin{align*}
{}^{\pi} \tilde{\Phi}_t = \mathbb{E}[ \tilde{\Phi}_t \, | \,
\mathcal{F}_{t-}] = \mathbb{E}[ \Phi_t \, | \, \mathcal{F}_{t-} ] =
\mathbb{E}[ {}^p \Phi_t \, | \, \mathcal{F}_{t-} ] = {}^p \Phi_t
\quad \text{$\mathbb{P}$--a.s.}
\end{align*}
are valid, which gives us
\begin{align*}
\int_0^T \mathbb{E}[\|{}^{\pi} \tilde{\Phi}_t - {}^p \Phi_t\|^2]dt =
0,
\end{align*}
proving the claimed result.
\end{proof}

\begin{lemma}\label{lemma-cadlag}
If $\Phi \in C_{\rm ad}[0,T]$ has a c\`adl\`ag-version, then we have
\begin{align*}
{}^p \Phi = \Phi_- \quad \text{in $L^2(\mathcal{P}_T)$.}
\end{align*}
\end{lemma}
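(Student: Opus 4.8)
The plan is to show that the left-limit process $\Phi_-$ of a càdlàg version of $\Phi$ is itself a predictable version of $\Phi$ belonging to $L^2(\mathcal{P}_T)$; the asserted identity then follows at once from the uniqueness of predictable versions recorded in the proof of Proposition \ref{prop-embedding}. Concretely, I would fix a càdlàg version $\tilde{\Phi}$ of $\Phi$ and write $\Phi_- = \tilde{\Phi}_-$ for its pathwise left-limit process, noting first that $\Phi_-$ is well defined up to indistinguishability because any two càdlàg versions of $\Phi$ are indistinguishable.

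First I would collect the structural properties of $\Phi_-$ that are immediate. Since $\tilde{\Phi}$ is càdlàg, the process $\Phi_-$ is left-continuous and adapted, hence predictable, so it is a legitimate candidate element of $L^2(\mathcal{P}_T)$. Its square-integrability will follow once $\Phi_-$ is known to be a version of $\Phi$, via the same estimate used earlier, namely $\int_0^T \mathbb{E}[\|(\Phi_-)_t\|^2]\,dt = \int_0^T \mathbb{E}[\|\Phi_t\|^2]\,dt \leq T \sup_{t \in [0,T]} \mathbb{E}[\|\Phi_t\|^2] < \infty$.

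The crucial and least routine step is to verify that $\Phi_-$ is a version of $\Phi$, i.e. that $(\Phi_-)_t = \Phi_t$ $\mathbb{P}$-a.s. for every fixed $t \in (0,T]$. This is exactly where the continuity of $\Phi$ as an $L^2$-curve enters, and it is what rules out the pathological predictable jumps that a generic càdlàg adapted process may exhibit (such jumps would already violate $L^2$-left-continuity). I would fix $t$, choose a sequence $s_n \uparrow t$ with $s_n < t$, and use the $L^2$-continuity to obtain $\Phi_{s_n} \to \Phi_t$ in $L^2(\Omega;H)$. Passing to a subsequence, I may assume $\Phi_{s_n} \to \Phi_t$ $\mathbb{P}$-a.s. in $H$. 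On the other hand, since $\tilde{\Phi}$ is a version of $\Phi$, we have $\tilde{\Phi}_{s_n} = \Phi_{s_n}$ a.s. for every $n$ simultaneously, off a single null set, while the càdlàg property forces $\tilde{\Phi}_{s_n} \to \tilde{\Phi}_{t-} = (\Phi_-)_t$ pathwise. Comparing the two almost-sure limits yields $(\Phi_-)_t = \Phi_t$ a.s.

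Finally I would assemble the pieces: $\Phi_-$ is a predictable element of $L^2(\mathcal{P}_T)$ and a version of $\Phi$, and so is ${}^p \Phi$ by construction, whence the uniqueness of predictable versions in $L^2(\mathcal{P}_T)$ from the proof of Proposition \ref{prop-embedding} gives ${}^p \Phi = \Phi_-$ in $L^2(\mathcal{P}_T)$. I expect the only genuine obstacle to be the almost-sure identification of $\Phi_-$ with $\Phi$ at a fixed time in the third step; the extraction of an almost-surely convergent subsequence from the $L^2$-convergence, combined with the pathwise left limit of the càdlàg version, is precisely the mechanism that makes this work. (One could alternatively route the argument through Lemma \ref{lemma-pred-proj} and the predictable projection, but that would require verifying the vanishing of predictable jumps directly along predictable times, which is less transparent than the present approach.)
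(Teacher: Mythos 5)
Your proof is correct, but it takes a genuinely different route from the paper's. The paper argues entirely pathwise: $\Phi_-$ is predictable, and
\begin{align*}
\int_0^T \mathbb{E}[\|{}^p\Phi_t - \Phi_{t-}\|^2]\,dt = \mathbb{E}\bigg[\int_0^T \|\Delta\Phi_t\|^2\,dt\bigg] = 0
\end{align*}
simply because the jump set $\{t : \Delta\Phi_t(\omega) \neq 0\}$ of a c\`adl\`ag path is countable, hence Lebesgue-null, for every $\omega$. That is Fubini in the ``fix $\omega$, integrate over $t$'' direction, and it never uses the mean-square continuity of $\Phi$. You instead fix $t$ and show the $\omega$-section is null: $L^2$-left-continuity plus an a.s.\ convergent subsequence identifies $(\Phi_-)_t$ with $\Phi_t$ almost surely at \emph{every fixed} $t$, after which the uniqueness of predictable versions from Proposition \ref{prop-embedding} finishes the argument. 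Your approach proves the strictly stronger statement that $\Phi_-$ is a version of $\Phi$ at each fixed time (the paper only gets equality $\mathbb{P}\otimes\lambda$-a.e.), and it makes visible where mean-square continuity excludes fixed times of discontinuity; the price is the subsequence extraction and the bookkeeping of null sets. The paper's argument is shorter and more robust in the sense that it would go through even without the $L^2$-continuity hypothesis, since integration against Lebesgue measure already annihilates the countable jump set. One cosmetic remark: your parenthetical suggestion that a direct argument would have to control predictable jumps is not really an obstacle here, precisely because the comparison is made in $L^2(\mathcal{P}_T)$ rather than up to indistinguishability.
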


\begin{proof}
The process $\Phi_-$ is predictable and we have
\begin{align*}
\mathbb{E} \bigg[ \int_0^T \| {}^p \Phi_t - \Phi_{t-} \|^2 dt \bigg] =
\mathbb{E} \bigg[ \int_0^T \| \Phi_t - \Phi_{t-} \|^2 dt \bigg] =
\mathbb{E} \bigg[ \int_0^T \| \Delta \Phi_t \|^2 dt \bigg] = 0,
\end{align*}
because $\mathcal{N}_{\omega} = \{ t \in [0,T] : \Delta
\Phi_t(\omega) \neq 0 \}$ is countable for all $\omega \in \Omega$.
\end{proof}

\begin{proposition}
For each $\Phi \in C_{\rm ad}[0,T]$ we have
\begin{align*}
\text{{\rm (G-)}}(\Phi \cdot M) = {}^p \Phi \cdot M \quad \text{in
$C_{\rm ad}[0,T]$.}
\end{align*}
In particular, $\text{{\rm (G-)}}(\Phi \cdot M)$ has a
c\`adl\`ag-version.
\end{proposition}

\begin{proof}
Let $t \in [0,T]$ and $\epsilon > 0$ be arbitrary. Since $\Phi \in
C_{\rm ad}[0,T]$, it is uniformly continuous on the compact interval
$[0,t]$, and thus there exists $\delta > 0$ such that
\begin{align}\label{continuity-e-d}
\mathbb{E}[\| \Phi_u - \Phi_v \|^2] < \frac{\epsilon}{\langle M,M
\rangle_t}
\end{align}
for all $u,v \in [0,t]$ with $|u - v| < \delta$. Let $\mathcal{Z} =
\{ 0 = t_0 < t_1 < \ldots < t_n = t\}$ be an arbitrary decomposition
with $\sup_{i = 0,\ldots,n-1} |t_{i+1} - t_i| < \delta$. Defining
\begin{align*}
\Phi^{\mathcal{Z}} := \Phi_0 \mathbbm{1}_{[0]} + \sum_{i = 0}^{n-1}
\Phi_{t_i} \mathbbm{1}_{(t_i,t_{i+1}]},
\end{align*}
we obtain, by using the It\^o-isometry and (\ref{continuity-e-d}),
\begin{align*}
&\mathbb{E} \left[ \bigg\| ({}^p \Phi \cdot M)_t - \sum_{i=0}^{n-1}
\Phi_{t_i} (M_{t_{i+1}} - M_{t_i}) \bigg\|^2 \right] = \mathbb{E}
\left[ \bigg\| \int_0^t ({}^p \Phi_s  - \Phi_s^{\mathcal{Z}}) dM_s
\bigg\|^2 \right]
\\ &= \mathbb{E} \bigg[ \int_0^t \| {}^p \Phi_s - \Phi_s^{\mathcal{Z}} \|^2 d \langle
M,M \rangle_s \bigg] = \sum_{i=0}^{n - 1} \int_{t_i}^{t_{i+1}}
\mathbb{E} [ \| \Phi_s - \Phi_{t_i}\|^2 ] d\langle M,M \rangle_s <
\epsilon,
\end{align*}
establishing that ${}^p \Phi \cdot M$ is a version of $\text{{\rm
(G-)}}(\Phi \cdot M)$.
\end{proof}

\subsection{Stochastic integral with respect to Lebesgue measure}

In an analogous fashion, we introduce the stochastic integral ${\rm
\text{(G-)}}(\Phi \cdot \lambda)$ with respect to the Lebesgue
measure $\lambda$, cf. \cite[Lemma 3.6]{Onno-Levy}. By similar
arguments as in the previous subsection, we obtain the same relation
between this stochastic integral ${\rm \text{(G-)}}(\Phi \cdot
\lambda)$ and the usual Bochner integral $\Phi \cdot \lambda$.

\subsection{Stochastic integral with respect to a L\'evy process}\label{subsec-Levy}

Now let $X$ be a square-integrable L\'evy process with
semimartingale decomposition $X_t = M_t + bt$, where $M$ is a
square-integrable L\'evy martingale and $b \in \mathbb{R}$.
According to \cite[Def. 3.7]{Onno-Levy} we set
\begin{align*}
{\rm \text{(G-)}} (\Phi \cdot X) := \text{{\rm (G-)}} (\Phi \cdot M)
+ b \text{{\rm (G-)}} (\Phi \cdot \lambda).
\end{align*}
As a direct consequence of our previous results, we obtain:

\begin{proposition}\label{prop-coin}
For each $\Phi \in C_{\rm ad}[0,T]$ we have
\begin{align*}
\text{{\rm (G-)}}(\Phi \cdot X) = {}^p \Phi \cdot X \quad \text{in
$C_{\rm ad}[0,T]$.}
\end{align*}
In particular, $\text{{\rm (G-)}}(\Phi \cdot X)$ has a
c\`adl\`ag-version.
\end{proposition}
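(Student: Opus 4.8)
The plan is to reduce Proposition~\ref{prop-coin} to the two integral decompositions that have already been established, so that almost all of the work is inherited from the previous subsections rather than redone. Concretely, I would start from the defining identity
\begin{align*}
\text{{\rm (G-)}}(\Phi \cdot X) = \text{{\rm (G-)}}(\Phi \cdot M) + b \, \text{{\rm (G-)}}(\Phi \cdot \lambda),
\end{align*}
which is the definition of the stochastic integral with respect to the L\'evy process $X$ given just above the statement, and then apply the two coincidence results one factor at a time.

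First I would invoke the proposition of the preceding subsection, which gives $\text{{\rm (G-)}}(\Phi \cdot M) = {}^p \Phi \cdot M$ in $C_{\rm ad}[0,T]$. Next I would invoke the analogous statement announced in the Lebesgue-measure subsection, namely that $\text{{\rm (G-)}}(\Phi \cdot \lambda) = {}^p \Phi \cdot \lambda$ in $C_{\rm ad}[0,T]$, where on the right-hand side $\Phi \cdot \lambda$ is the usual (pathwise) Bochner integral. Substituting both into the defining identity, and using the semimartingale decomposition $X = M + b\lambda$ together with the linearity of the It\^o-integral in the integrator, I would obtain
\begin{align*}
\text{{\rm (G-)}}(\Phi \cdot X) = {}^p \Phi \cdot M + b \, ({}^p \Phi \cdot \lambda) = {}^p \Phi \cdot X \quad \text{in $C_{\rm ad}[0,T]$,}
\end{align*}
which is the claimed equality.

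The final assertion, that $\text{{\rm (G-)}}(\Phi \cdot X)$ admits a c\`adl\`ag-version, then follows immediately: the It\^o-integral ${}^p \Phi \cdot X$ of a predictable $L^2(\mathcal{P}_T)$-integrand against a semimartingale is by construction a c\`adl\`ag (in fact c\`adl\`ag and adapted) process, and we have just identified it as a version of $\text{{\rm (G-)}}(\Phi \cdot X)$. I would note that the membership ${}^p \Phi \in L^2(\mathcal{P}_T)$, which guarantees that the two It\^o/Bochner integrals on the right are well-defined, has already been secured by the earlier lemma and by Proposition~\ref{prop-embedding}.

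I do not expect any genuine obstacle here, since the statement is explicitly flagged as a direct consequence of the previous results. The only point requiring a word of care is bookkeeping between the two additive pieces: one must make sure that the \emph{same} predictable version ${}^p \Phi$ is used in both ${}^p \Phi \cdot M$ and ${}^p \Phi \cdot \lambda$, which is legitimate because ${}^p \Phi$ is determined uniquely in $L^2(\mathcal{P}_T)$ by Proposition~\ref{prop-embedding} and this single object serves as an admissible integrand against both $M$ and $\lambda$. Beyond this consistency check, the proof is purely a matter of adding the two already-proven coincidences and reading off the c\`adl\`ag property from the right-hand side.
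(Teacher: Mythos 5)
Your proposal is correct and follows exactly the route the paper intends: the paper gives no written proof for this proposition, presenting it as ``a direct consequence of our previous results,'' namely the definition $\text{{\rm (G-)}}(\Phi \cdot X) = \text{{\rm (G-)}}(\Phi \cdot M) + b\,\text{{\rm (G-)}}(\Phi \cdot \lambda)$ combined with the coincidence result for $M$ and its Lebesgue-measure analogue. Your additional remark about using the same representative ${}^p\Phi$ in both pieces is a reasonable bookkeeping point that the paper leaves implicit.
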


Summing up, we have seen that the space $C_{\rm ad}[0,T]$ of all
adapted continuous curves from $[0,T]$ into $L^2(\Omega;H)$ is embedded into
$L^2(\mathcal{P}_T)$ via $\Phi \mapsto {}^p \Phi$, see Proposition
\ref{prop-embedding}, and that the It\^o-integral ${}^p \Phi \cdot
X$ is a c\`adl\`ag-version of $\text{{\rm (G-)}}(\Phi \cdot X)$, see
Proposition \ref{prop-coin}. Moreover, we have seen the relation to
the predictable projection in Lemma \ref{lemma-pred-proj}.

We close this section with an example, which seems surprising at a
first view. Let $X$ be a standard Poisson process with values in
$\mathbb{R}$. In Ex. 3.9 in \cite{Onno-Levy} it is derived that
\begin{align*}
{\rm \text{(G-)}}\int_0^t X_s dX_s = \frac{1}{2} ( X_t^2 - X_t ).
\end{align*}
Apparently, this does not coincide with the pathwise
Lebesgue-Stieltjes integral
\begin{align*}
\int_0^t X_s dX_s = \frac{1}{2} ( X_t^2 + X_t ).
\end{align*}
The explanation for this seemingly inconsistency is easily provided.
The process $X$ is not predictable, whence it is not
It\^o-integrable, and a straightforward calculation shows that
\begin{align*}
{\rm \text{(G-)}}\int_0^t X_s dX_s = \int_0^t X_{s-} dX_s.
\end{align*}
This, however, is exactly what an application of Proposition
\ref{prop-coin} and Lemma \ref{lemma-cadlag} yields.

\section{Solutions of stochastic partial differential
equations as $L^2$-curves}\label{sec-SDE}

Regarding stochastic integrals as $L^2$-curves provides an existence
and uniqueness proof for stochastic partial differential equations.
Of course, this result is well-known in the literature (see, e.g.,
\cite{Ruediger-mild}, \cite{Da_Prato}, \cite{SPDE}, \cite{Marinelli-Prevot-Roeckner}, \cite{P-Z-book}), whence we only give an outline.

Consider the stochastic partial differential equation
\begin{align}\label{SPDE}
\left\{
\begin{array}{rcl}
dr_t & = & (A r_t + \alpha(t,r_t))dt + \sum_{i=1}^n
\sigma_i(t,r_{t-})dX_t^i \medskip
\\ r_0 & = & h_0,
\end{array}
\right.
\end{align}
where $A : \mathcal{D}(A) \subset H \rightarrow H$ denotes the
infinitesimal generator of a $C_0$-semigroup $(S_t)_{t \geq 0}$ on
$H$, and where $X^1,\ldots,X^n$ are real-valued, square-integrable
L\'evy processes as in Section \ref{subsec-Levy}. We assume that the standard Lipschitz conditions
are satisfied.

Then, there exists a unique solution $r \in C_{\rm ad}[0,T]$ of the equation
\begin{align*}
r_t := S_t h_0 + \text{{\rm (G-)}} \int_0^t S_{t-s} \alpha(s,r_s)ds +
\sum_{i=1}^n \text{{\rm (G-)}} \int_0^t S_{t-s}
\sigma_i(s,r_s)dX_s^i,
\end{align*}
see \cite{Onno-Wiener} for the Wiener case and \cite{Onno-Levy} for
the L\'evy case. It is remarkable that the proof is established by
means of precisely the same arguments as in the classical
Picard-Lindel\"of iteration scheme for ordinary differential
equations, where one works on the Banach space $C([0,T];H)$ instead
of $C_{\rm ad}[0,T]$.

Applying Proposition \ref{prop-coin} for any fixed $t \in [0,T]$, we
obtain the existence of a (up to a version) unique, predictable mild
solution for the SPDE
\begin{align*}
\left\{
\begin{array}{rcl}
dr_t & = & (A r_t + \alpha(t,r_t))dt + \sum_{i=1}^n
\sigma_i(t,r_{t})dX_t^i
\medskip
\\ r_0 & = & h_0,
\end{array}
\right.
\end{align*}
which, in addition, is mean-square continuous.

Observe that we have no statement on path properties of the
solution. If, however, the semigroup in pseudo-contractive, i.e.,
there exists $\omega \in \mathbb{R}$ such that
\begin{align*}
\| S_t \| \leq e^{\omega t}, \quad t \geq 0
\end{align*}
then the stochastic convolution (It\^o-)integrals have a
c\`adl\`ag-version. This can be shown by using the Kotelenez
inequality (see \cite{Kotelenez}) or by using the
Sz\H{o}kefalvi-Nagy theorem on unitary dilations (see, e.g.,
\cite[Thm. I.8.1]{Nagy}, or \cite[Sec. 7.2]{Davies}). We refer to
\cite[Sec. 9.4]{P-Z-book} for an overview. In this case, we conclude
that there even exists a (up to indistinguishability) unique
c\`adl\`ag, adapted mild solution $(r_t)_{t \geq 0}$ for
(\ref{SPDE}), which, in addition, is mean-square continuous.

\subsection*{Acknowledgement}

The author gratefully acknowledges the support from WWTF (Vienna Science and Technology Fund).

The author is also grateful to an anonymous referee for her/his helpful comments and suggestions.

\bibliographystyle{apalike}

\bibliography{Integrals_Curves}

\end{document}